\newcommand{\CC}{{\mathbb{C}}}
\newcommand{\NN}{{\mathbb{N}}}
\newcommand{\TT}{{\mathbb{T}}}
\newcommand{\KK}{{\mathbb{K}}}
\newcommand{\PP}{{\mathbb{P}}}
\newcommand{\cN}{{\mathcal N}}
\newcommand{\cI}{{\mathcal I}}
\newcommand{\cC}{{\mathcal{C}}}
\newcommand{\nN}{{\mathcal{N}}}
\newcommand{\rR}{{\mathcal{R}}}
\newcommand{\cM}{{\mathcal{M}}}
\newcommand{\cW}{{\mathcal{W}}}
\newcommand{\tc}{{\tilde{c}}}
\newcommand{\rank}{\mathop{\mbox{\rm rank}\,}\nolimits}
\newcommand{\Span}{\mathop{\mbox{\rm span}\,}\nolimits}
\newcommand{\lf}{{\tt lf}}
\newcommand{\Syz}{{\tt Syz}}
\newcommand{\lcm}{{\tt lcm}}
\newcommand{\lm}{{\tt lm}}
\newenvironment{tablehere}
{\def\@captype{table}}
{}
\journalname{arXiv submission}
\begin{document}

\title{Numerical computation of H--bases}


\author{Masoumeh Javanbakht         \and
        Tomas Sauer 
}


\institute{M. Javanbakht \at
                Department of Mathematics, 
                Hakim Sabzevari University, 
                397 Sabzevar, Iran \\
              \email{masumehjavanbakht@gmail.com}           
           \and
           T.~Sauer \at
              Lehrstuhl f\"ur Mathematik mit Schwerpunkt Digitale
              Bildverarbeitung \& FORWISS,
              University of Passau,
              Innstr. 43,
              94032 Passau, Germany \\
              \email{Tomas.Sauer@uni-passau.de}     
}

\date{Received: date / Accepted: date}

\maketitle

\begin{abstract}
This paper gives a numerically stable method to compute H-basis which
is based on the computing a minimal basis for the module of syzygies
using singular value decomposition. We illustrate the performance of
this method by means of various examples.
\keywords{H-basis \and Syzygy \and SVD}
\subclass{13P10 \and 65F30}
\end{abstract}

\section{Introduction}
The concept of Gr\"obner bases plays an important, if not the
fundamental, role in the development of modern computational Algebraic
Geometry and Computer Algebra systems. Indeed, it provides an
important tool to study and solve numerous problems in many different
areas, ranging from optimization, coding theory, cryptography, to
signal and image processing, robotics, statistics and even more,
cf.~\cite{Buchberger85}.

One of the main drawbacks of Gr\"obner bases, however, is the fact
that their structure depends on a monomial ordering or \emph{term
  order} which breaks the symmetry among the variables and
consequently also loses symmetries present in the underlying
problem. Moreover, Gr\"obner bases are \textbf{not} numerically stable
and even small perturbations in the coefficients of the polynomials
generating an ideal may change the result dramatically,
cf. \cite{MoellerSauerSM00II,Stetter05}. Thus, the development of other
methods with more numerical stability which 
also preserve symmetry is needed.

\emph{H-bases}, or \emph{Macaulay bases} as they are sometimes called
nowadays, are even older than Gr\"obner bases and were
introduced by Macaulay in 1914, cf. \cite{Macaulay16}. Macaulay computed an
H-basis only for a specific example by determining \textit{syzygies} among
the maximal degree homogeneous parts of the polynomials, the so-called
\textit{leading forms}, but he did not give a general symbolic algorithm in
this regard. 
A symbolic algorithm to construct H-bases without relying on monomial orderings
was introduced in \cite{Sauer01}. This algorithm is a quite direct
generalization of Buchberger's algorithm and relies on a
\textit{reduction algorithm} which is a generalization of euclidean
division with remainder to the multivariate case. The crucial point in
this reduction 
algorithm consists of \textit{orthogonalizing} the leading forms
instead of attempting the impossible task of cancelling them. This
generalized reduction leads to a characterization of H-bases which is
based on reducing a generating set of the \emph{syzygy module} of
leading forms. Therefore, determining a basis for the module of syzygies of
finitely many homogeneous forms becomes a crucial part of the
construction of the H-basis. Unfortunately, finding a basis for the
syzygies between forms is far
more intricate than finding a basis for the syzygies between terms.
 
According to our knowledge, there are essentially two general ways to
construct a 
basis for the syzygy module of an ideal. The first one, described in
\cite{Buchberger85}, is based on computing a Gr\"obner basis for the
underlying ideal, while the numerically more suitable approach 
is based on a Linear Algebra and addressed, for example in
\cite{HashemiJavanbaht2017S}; it is referred to as
\textit{degree by degree approach} there. The key component in this
approach is to generate a homogeneous matrix of coefficient vectors of
leading forms of polynomials.

A more general type of such matrices were introduced as \textit{Macaulay
  matrices} and analysed in \cite{batselier14:_canon_decom_c_numer_groeb_border_bases,batselier14:_macaul,batselier14:_null_macaul}. In
\cite{batselier14:_null_macaul}, the degree of regularity of a
polynomial system is 
described and a formula for the dimension of the null space of
Macaulay matrices is derived. A recursive orthogonalization scheme for
two subspaces of these matrices, the range, i.e., the row spaces, and
their null spaces, are 
examined in \cite{batselier14:_macaul}. A decomposition for the vector
space of monomials for a given degree $ k $, and numerical Gr\"oebner
and border bases are finally examined in
\cite{batselier14:_canon_decom_c_numer_groeb_border_bases}.

In this article we describe how one can use a submatrix of Macaulay
matrices to efficiently determine a basis for the syzygy module of
leading forms by techniques from Numerical Linear Algebra and
apply this to develop a numerical algorithm for determining an H-basis,
relying again on well-understood techniques from Numerical Linear
Algebra which are available as efficient and stable implementations
for example in Matlab. Our motivation comes also from the fact that in
practical problems the polynomial systems are often only given as
results of preceding computations as in \cite{Sauer17_Prony} and
therefore ``empirically'', i.e., with inaccurate coefficients,
in the sense of \cite{Stetter05}. 
In this case, using a symbolic algorithm to
compute an H-basis is often not meaningful and may even lead to
misleading results. Moreover the numerical methods are faster than the purely
symbolic ones by orders of magnitude and also have much smaller memory
requirements. We will, however, see in the examples later that there
also exists ill-conditioned ideals where small roundoff errors
contaminate the computations so heavily that the eventual result can even
be wrong.

The paper is organized as follows. After introducing the necessary
concepts and terminology, we will describe the algorithms for syzygy
determination and reduction and prove their validity; moreover, we
will discuss a stopping criterion for the H-basis process. Finally, we
apply the method to various examples of ideals that are known to be
notoriously difficult and serve as benchmarks for ideal basis
computations. The examples will show that there are ideals that are
well-conditioned and that there are numerically ill-conditioned
ideals, the latter due to the fact that some of the normalized
reductions can result in very small remainders that are very hard to
distinguish from zero remainders numerically.

\section{Notations and Definitions}
For a field $\KK$, usually of characteristic zero, we consider the
polynomial ring $ \PP=\KK \left[ x_{1}, \ldots, x_{n} \right]$. Its
subset and linear subspace of homogeneous polynomials of degree $k$ is
defined by
\[
\PP_k := \Span_\KK \{  x^\alpha : |\alpha| = k \},
\]
where the \emph{length} of a multiindex $\alpha \in \NN_0^n$ is
defined as $|\alpha| := \alpha_1 + \cdots + \alpha_n$. Moreover,
\[
\PP_{\leq k} := \bigoplus_{i=0}^k \PP_i
\]
is the linear space of all
polynomials of degree $\le k$.
A monomial $ x^\alpha= x_1^{\alpha_1}\cdots x_n^{\alpha_n}  $ has the
\emph{multidegree} $ \alpha=(\alpha_1, \ldots, \alpha_n)\in \NN_{0}^{n} $ and
the \emph{total degree} $ |\alpha |$. When we speak of the \emph{degree} of a
polynomial, we always mean the total degree, i.e.
\[
\deg (f) := \max \left\{ |\alpha| : f_\alpha \neq 0 \right\}, \qquad f(x) =
\sum_{\alpha \in \NN_0^n} f_\alpha \, x^\alpha.
\]
Let $ \TT $ denote the set of all these monomials, as
well as $ \TT_{k}:= \TT \cap \PP_k =  \{x^{\alpha} : |\alpha|=k\} $. Any
polynomial $ f $ can be written as a finite linear combination
\[
f(x)=\sum_{\alpha\in \NN_{0}^{n}} f_{\alpha} x^{\alpha}, \qquad
f_\alpha \in \KK,
\]
where the coefficients $ f_{\alpha} $ are indexed by using the
standard multi-index notation.
In practical implementations, however, we have to map the multiindices
to $\NN$ by equipping $ \TT $ with a fixed monomial ordering,
cf. \cite{deBoor00}. Since our ordering
has to be compatible with the total degree, we conveniently equip $ \TT $ with
the \emph{graded lexicographical} ordering and any polynomial can
be identified with its coefficient vector. Note that our
results do not depend on the choice of this monomial ordering, it only may
affect the computational efficiency of the implementation.

For $f = f^0 + \cdots + f^k$, $k = \deg f$, $f^i \in \PP_i$, with $f^k
\neq 0$, we call $\lf(f) := f^{k}$ the \emph{leading form} of $f$.
For every ideal $ \cI = \left\langle p_1,\dots,p_s \right\rangle$,
generated by $ p_1, \dots, p_s $, the 
\emph{homogeneous ideal}
\[
\lf( \cI ) := \left\{ \lf (p) : p \in \cI \right\}
\]
can contain polynomials $ p $ such that $\lf(p)\notin \langle
\lf(p_1),\dots, \lf(p_s)\rangle $. The absence of this unwanted
situation leads to the definition of an H-basis. To formulate it, we
recall that a \emph{syzygy} of polynomials $(p_1,\dots,p_s)$ is a
tuple $(q_1,\dots,q_s) \in \PP^s$ such that
$$
\sum_{j=1}^s q_j \, p_j = 0.
$$
By $\Syz (p_1,\dots,p_s)$ we denote the set of all such syzygies, the
\emph{syzygy module} of $p_1,\dots,p_s$.

\begin{definition}\label{D:Hbasis}
  The (finite) set $\lbrace p_1, \dots, p_s\rbrace\subset \PP$ is
  called an H-basis for the ideal $ \cI:=\langle p_1, \dots,
  p_s\rangle $, if one of the following equivalent conditions holds: 
  \begin{enumerate}
    \item\label{it:DHbasis2}
    If $ p\in \cI $ then $ \lf(p)\in \langle \lf(p_1), \dots,
    \lf(p_s)\rangle $.
  \item\label{it:DHbasis1}
    $ p\in \cI $ is equivalent to
    \begin{equation}
      \label{eq:Hrep}
      p = \sum_{i=1}^{s} h_i \, p_i, \qquad h_i\in \PP_{\leq \deg (p)-\deg
        (p_i)}, \quad i=1,\dots,s.
    \end{equation}
     \item\label{it:DHbasis3}
    If
    $ (h_1, \dots, h_s)\in \Syz( \lf(p_1), \dots, \lf(p_s))$ then
    there exist $g_1, \dots, g_s\in \PP$ such that
    $$
    \sum_{i=1}^{s} h_i \, p_i = \sum_{i=1}^{s} g_i \, p_i, \qquad \deg
    (g_ip_i)\leqslant \deg (\sum_{i=1}^{s} h_ip_i), \quad i=1,\dots,
    s.
    $$
  \end{enumerate}
\end{definition}

\begin{remark}
  The restriction that the H--basis is finite is no restriction at
  all. By Hilbert's Basissatz there always exists a finite basis for
  any polynomial ideal $\cI$ which can be transformed into an H--basis
  by a variant of Buchberger's algorithm. Moreover,
  condition~(\ref{it:DHbasis3}) means that any \emph{syzygy} of
  \emph{homogeneous} leading forms can be reduced to zero. A proof of
  the equivalence of the above properties can be found, for example,
  in \cite{MoellerSauer00}.
\end{remark}


\noindent
The condition~(\ref{it:DHbasis3}) allows for a direct extension of
Buchberger's algorithm to compute H-bases, cf. \cite{Sauer01}, but
finding a basis for $ \Syz( \lf(p_1), \dots, \lf(p_s)) $ is crucial
in this extension.
Having at hand a Gr\"obner basis for the ideal $ \langle \lf(p_1), \dots,
\lf(p_s)\rangle $, it is indeed possible to compute such a basis, but
this is unsatisfactory since then one could use the Gr\"obner basis
directly. A more direct way is the degree by degree approach presented in
\cite{HashemiJavanbaht2017S}, which determines an H-basis degree by degree
without having to rely on Gr\"obner bases at all.

To apply the degree by degree approach in our context here,
we need to recall the following definition. 
\begin{definition}\label{D:MacaulayMat}
  Given a set $F=\{ f_1, \dots, f_s\}\subseteq \PP$
  and $k \in \NN_0$, we define the subspace 
  of degree $k$ as
  \begin{equation}
    \label{eq:MacMatDef}
    \cM_k(F) := \left\{ \sum_{i=1}^s h_i \, \lf(f_i) : h_i \in \PP_{k -
        \deg(f_i)},\, 1 \le i \le s \right\} \subseteq \PP_k, 
  \end{equation}
  and the space of homogeneous syzygies of degree $k$ for the leading
  forms as
  \begin{eqnarray}
    \nonumber
    \Syz_k(\lf(F))
    & := & \Syz_k \left(\lf(f_1),\ldots,\lf(f_s) \right) \\
    \label{eq:HomLeadSyz}
    & := & \left\{
      (h_1,\ldots,h_s) : \sum_{i=1}^s h_i \lf(f_i)=0, \, h_i\in
           \PP_{k-\deg(f_i)} \right\}.
  \end{eqnarray}
  In all these definitions we use the convention that $\PP_k = \{ 0
  \}$ whenever $k < 0$.
\end{definition}

\noindent
A generating system for the space $ \cM_k(F) $ is given by the
following matrices.\\
\begin{definition}\label{D:CkDef}
  For $f \in \PP$ we define the matrices
  \[
  C_k(f):=\left[ x^{\alpha} \lf (f) (x) : |\alpha| = k-\deg (f)\right]
  \in \PP_k^{1 \times d_{k-\deg(f)}}, \qquad d_k := {k+n \choose n-1},
  \]
  and their concatenation
  \[
  C_k (F) := \left[ C_k (f_1), \dots, C_k (f_s) \right] \in
  \PP_{k}^{1 \times (d_{k-\deg(f_1)} + \cdots + d_{k-\deg(f_s)})}.
  \]
  Identifying the polynomials with their coefficients with respect to
  the homogeneous monomial basis of degree $k$, we can also assume
  that
  \begin{equation}
    \label{eq:C_k(F)Def}
    C_k (F) \in \KK^{d_k \times d_{k-\deg(f_1)} + \cdots +
    d_{k-\deg(f_s)}},
  \end{equation}
  which is exactly the way how this matrix will be represented on the
  computer with the multiindices in rows and columns ordered in the
  graded lexicographical way.
\end{definition}

\noindent
It can be easily seen that
\begin{equation}
  \label{eq:MSpanC}
  \cM_k (F) = \Span \left\{ C_k (f) : f \in F, \, \deg(f) \le k
  \right\} = \Span C_k (F).
\end{equation}
To simplify \eqref{eq:C_k(F)Def}, we introduce the abbreviation
\[
d_{k - \deg(F)} := \sum_{i=1}^s d_{k - \deg(f_i)},
\]
denote by $\rR \left( C_k(F) \right) \subseteq \PP$ the range of $
C_{k}(F) $ and use
\[
\nN(C_k(F)) = \{ v \in \KK^{d_{k-\deg(F)}} : C_k (F) \, v = 0 \}
\subseteq \KK^{d_{k-\deg(F)}}
\]
for the \emph{null space} or \emph{kernel} of $ C_{k}(F)$. 

\noindent
The space $ \cM_k(F) $ now allows us to establish a connection
between $ \nN \left( C_k(F) \right)$ and $ \Syz_{k}(\lf (F) )$.
Indeed, if $ v \in \nN(C_k(F)) $, we see from the affine column space
interpretation that the expression $ C_{k}(F) v = 0 $ is
equivalent to
\begin{equation}
 \label{eq:VSyz}
 0 = \sum_{i=1}^s \sum_{|\alpha| = k - \deg(f_i)} v_{i,\alpha} \,
 x^\alpha \,\lf (f_i) (x) =: \sum_{i=1}^{s} h_i\lf (f_i)=0.
\end{equation}
Here, the vector $ v $ contains the coefficients of polynomials $ h_i
$. This fact along with the following concept helps us to compute
H-basis degree by degree.

\begin{definition}
  A set $\{p_1,\ldots,p_s\} \subset \PP$ of polynomials is called an
  \emph{H-basis up to (degree) $K$}, $K \in \NN_0$, if for every $k \le K$ and
  any $(h_1,\ldots,h_s) \in \Syz_k(\lf(p_1),\ldots,\lf(p_s))$ there
  exist $g_1,\ldots,g_s\in \PP$ such that
  \[
  \sum_{i=1}^s h_i \, p_i = \sum_{i=1}^s g_i \, p_i, \qquad
  g_i p_i \in \PP_{\leq k-1}, \quad i=1,\dots,s.
  \]
\end{definition}

\noindent
Note that $\{p_1,\ldots,p_s \}$ is an H-basis if and only if it is an
H-basis up to $K$ for every positive integer $K$.

\noindent
This argument shows that finding a basis for $ \nN(C_{k}(F)) $ is
crucial in this approach. On the other hand, one of the most robust
and numerically stable ways to find the orthogonal basis for the null
space is the singular
value decomposition (SVD). The following classical theorem recalls how
the basis vectors for $ \rR(C_k(F)) $ and $\nN(C_k(F)) $ can be read
from SVD of matrix $C_{k}(F) $, cf. \cite{GolubvanLoan96}.

\begin{theorem}
 \label{Theo:SVD}
  Let $ A\in \CC^{m\times n} $ with rank $ A=r $ and let $ A=U\Sigma
  V^{H} $ be SVD of $ A $, then the vectors $ u_1, \dots, u_r $ form
  a basis for $ \rR (A)$ and the vectors $ v_{r+1}, \dots, v_n $ form
  a basis for $ \nN  (A)$.
\end{theorem}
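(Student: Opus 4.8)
The statement to prove is the classical SVD characterization of range and null space. The plan is to exploit the defining properties of the SVD directly, namely that $U = [u_1,\dots,u_m]$ and $V = [v_1,\dots,v_n]$ are unitary and that $\Sigma$ is an $m \times n$ matrix whose only nonzero entries are the singular values $\sigma_1 \ge \cdots \ge \sigma_r > 0$ on the diagonal, with $\sigma_{r+1} = \cdots = 0$. First I would record the column relations that follow from $A V = U \Sigma$: comparing columns gives $A v_j = \sigma_j u_j$ for $1 \le j \le r$ and $A v_j = 0$ for $r < j \le n$. This single identity is essentially the engine behind both claims.

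For the null space, the inclusion $\Span\{v_{r+1},\dots,v_n\} \subseteq \nN(A)$ is immediate from $A v_j = 0$. For the reverse inclusion I would take $x \in \nN(A)$ and expand it in the orthonormal basis $v_1,\dots,v_n$ of $\CC^n$, writing $x = \sum_{j=1}^n c_j v_j$ with $c_j = v_j^H x$. Applying $A$ yields $0 = A x = \sum_{j=1}^r c_j \sigma_j u_j$, and since $u_1,\dots,u_r$ are orthonormal hence linearly independent and each $\sigma_j \neq 0$, we get $c_1 = \cdots = c_r = 0$, so $x \in \Span\{v_{r+1},\dots,v_n\}$. Linear independence of $v_{r+1},\dots,v_n$ is inherited from orthonormality of the columns of $V$, so they form a basis; as a byproduct $\dim \nN(A) = n - r$, consistent with the rank–nullity theorem.

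For the range, $\Span\{u_1,\dots,u_r\} \subseteq \rR(A)$ follows because $u_j = \sigma_j^{-1} A v_j \in \rR(A)$ for $1 \le j \le r$. Conversely, any $y \in \rR(A)$ has the form $y = A x$ for some $x \in \CC^n$; expanding $x = \sum_{j=1}^n c_j v_j$ as before gives $y = \sum_{j=1}^r c_j \sigma_j u_j \in \Span\{u_1,\dots,u_r\}$. Again $u_1,\dots,u_r$ are orthonormal, hence independent, hence a basis of $\rR(A)$, which also re-confirms $\rank A = r$ is consistent with the hypothesis.

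I do not expect a genuine obstacle here, since the argument is routine linear algebra once the SVD is in hand; the only point requiring a little care is bookkeeping the dimensions of the rectangular matrix $\Sigma$ and making sure the column-wise reading of $AV = U\Sigma$ correctly distinguishes the indices $j \le r$ from $j > r$ (and handling trivially the degenerate cases $r = 0$ or $r = n$). Everything else is a direct consequence of the orthonormality of the columns of $U$ and $V$ together with positivity of $\sigma_1,\dots,\sigma_r$.
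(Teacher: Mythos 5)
Your argument is correct and complete: reading $AV=U\Sigma$ columnwise to get $Av_j=\sigma_j u_j$ for $j\le r$ and $Av_j=0$ for $j>r$, then using orthonormality of the columns of $U$ and $V$ for both inclusions, is exactly the standard textbook proof. The paper itself offers no proof of Theorem~\ref{Theo:SVD} — it is quoted as a classical result with a reference to \cite{GolubvanLoan96} — and your write-up is precisely the argument found there, so there is nothing to reconcile.
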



\noindent
Indeed, every element of $ \nN(C_{k}) $ is a linear dependence relation
between the columns of $ C_{k} $ and expresses a syzygy of the form
$\sum_{i=1}^{s} h_i \lf(p_i)=0 $. On the other hand each column of $
C_k $ corresponds to a certain monomial multiple $ x^{\alpha}\lf(p_i)
$. Then, $x_j\sum_{i=1}^{s} h_i \lf(p_i)=0$, which means that all
columns corresponding with $ x_j x^{\alpha}\lf(p_i) =
x^{\alpha+\epsilon_j} \lf (p_i)$ in $ C_{k+1} $ will also be linear
dependent. We will refer to $ h = (h_1,\dots,h_s) $ in this case as a
\emph{basic syzygy} and its monomial multiple $ x_j h $ as an
\emph{extended syzygy}.

\noindent
In the next section we show that how SVD helps us to obtain a basis
for the pure syzygies as well as a basis for the extended syzygies. 

\section{Numerical syzygy computation}
\label{Sec:result}
For polynomials $ F=\{f_1, \dots, f_s\}\subseteq \PP\setminus \{0\} $
let $ d_i:=\deg (f_i)\leq k $, $ i=1, \dots, s $. We now present an
approach to obtain an orthonormal basis for $ \nN(C_{k+1}(F)) $
exploiting the structure of $C_k(F) $ and earlier computations of an
orthogonal basis for $ \nN(C_{k}(F)) $. Since $F$ is the same all over
this computation, we will simply use $C_k$ in the description of the
method and suppose that $ C_{k} $ is a $ t\times q $ matrix whereas $
\cC_{k+1} $ is a $ t' \times q' $ matrix. An orthogonal basis for the
null space of $C_k$ will be denoted by $N_k \in \CC^{q\times d}$
matrix, where $ d:=\dim \nN(C_k) $. By Definition~\ref{D:CkDef}, the
matrix $C_k$ can be partitioned as illustrated below. 


\[
\begin{array}{cccc}
  & \quad\:\,q_1 &   & \quad\;\;\:q_s \\
  C_k= &\multicolumn{3}{c}{t \begin{bmatrix} C_k(f_1)| & \cdots
      &|C_k(f_s) \end{bmatrix},}
\end{array}
\]
where $q_{j}:=\#\TT_{k-d_j} = d_{k-\deg(f_j)}$, $ j=1, \dots, s $, and
$t=d_k$.
Consequently $ N_k $ can be partitioned as  
\[
N_k = \begin{bmatrix} 
  B_{1} \\ \hline
  \vdots \\ \hline
  B_{s}
\end{bmatrix}, \qquad B_j \in \KK^{q_j \times d}.
\]
Now let $ L_{ij} \in \KK^{d_{k-d_i} \times d_{k-d_i+1}}$ be the
\emph{shift matrix} that represents the multiplication with $ x_j $, $
j=1,\dots, n $, $ i=1,\dots, s $. Then the block diagonal matrix 
\[
L_j := \begin{bmatrix} L_{1j} & &\\
  & \ddots &  \\
  &  &  L_{sj}\\
  0 &\cdots & 0 \end{bmatrix} \in \KK^{d' \times d}
\] 
has the property that
\begin{equation}
  v\in \nN(C_k) \qquad \Rightarrow \qquad  w := L_j v  \in
  \nN(C_{k+1}), \quad j=1,\dots,n,
\end{equation}
Setting
\[
A:=
\begin{bmatrix}
  L_1 &\cdots& L_n
\end{bmatrix}\, N_k
\]
and $ r:= $rank $ A $, the SVD of the matrix $ A $ will be
\[
A = Q S W^{H}, \qquad Q \in \CC^{d'\times d'}, \, W \in \CC^{d\times d}.
\]
Now $ Q $ can be partitioned as
\begin{equation}
  \label{eq:DeqQ}
Q =
\begin{bmatrix}
  Q_1 \,|\, Q_2
\end{bmatrix}, \qquad Q_1 \in \CC^{d \times r}, \, Q_2 = \CC^{d \times
  d-r},
\end{equation}
and we can compute yet another SVD
\[
B := C_{k+1} Q_2 = U \Sigma V^H.
\]
Let $ r':= \rank B$, partition $ V $ as
\begin{equation}
   \label{eq:DeqV}
V =
\begin{bmatrix}
  V_1 \, |\, V_2
\end{bmatrix}, \qquad V_1 \in \CC^{d' \times r'}, \, V_2 \in
\CC^{d' \times d'-r'},
\end{equation}
and define
\begin{equation}
  \label{eq:Nk+1Def}
  N_{k+1} :=
  \begin{bmatrix}
    Q_1 \,|\, Q_2 V_2
  \end{bmatrix},
\end{equation}
which finally allows us to draw the following conclusion.

\begin{theorem}
\label{Theo:7}
  $ N_{k+1} $ is an orthogonal basis for $ \cN(C_{k+1})$ such that $
  Q_1 $ is an orthogonal basis for the extended syzygies and $ Q_2V_2
  $ is an orthogonal basis for pure syzygies. 
\end{theorem}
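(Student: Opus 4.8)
The plan is to prove the statement in three stages: first, that $\rR(A)$ is exactly the span $\mathcal{E}$ of all extended syzygies and that $\mathcal{E}\subseteq\nN(C_{k+1})$; second, that $\nN(C_{k+1})$ splits orthogonally as $\mathcal{E}\oplus\mathcal{P}$, where $\mathcal{P}$ is by definition the space of pure syzygies; and third, that the two SVDs extract orthonormal bases of $\mathcal{E}$ and of $\mathcal{P}$ respectively. The crucial input for the first stage is the shift property recorded just before the theorem: if $v\in\nN(C_k)$, then $C_kv=0$ says $\sum_i h_i\,\lf(f_i)=0$ with the $h_i\in\PP_{k-d_i}$ given by the block components of $v$, so multiplying by $x_j$ yields $\sum_i(x_jh_i)\,\lf(f_i)=0$, whose coefficient vector is precisely $L_jv$; hence $L_jv\in\nN(C_{k+1})$. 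As the columns of $N_k$ span $\nN(C_k)$, the columns of $L_jN_k$ span $L_j(\nN(C_k))$, and therefore $\rR(A)=\sum_{j=1}^n L_j(\nN(C_k))=:\mathcal{E}\subseteq\nN(C_{k+1})$. Applying Theorem~\ref{Theo:SVD} to $A=QSW^H$ with $r=\rank A$ then gives that the columns of $Q_1$ form an orthonormal basis of $\rR(A)=\mathcal{E}$; since $Q$ is unitary, $Q_1^HQ_2=0$ and the columns of $Q_2$ form an orthonormal basis of the orthogonal complement $\mathcal{E}^{\perp}$ of $\mathcal{E}$ in the ambient space $\KK^{q'}$ of degree-$(k+1)$ syzygy vectors.

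Next I would decompose $\nN(C_{k+1})$. Since $\mathcal{E}\subseteq\nN(C_{k+1})$, any $w\in\nN(C_{k+1})$ can be written $w=Q_1(Q_1^Hw)+Q_2(Q_2^Hw)$, with the first summand in $\mathcal{E}\subseteq\nN(C_{k+1})$, so the second summand lies in $\mathcal{P}:=\nN(C_{k+1})\cap\mathcal{E}^{\perp}$; thus $\nN(C_{k+1})=\mathcal{E}\oplus\mathcal{P}$ orthogonally. Every element of $\mathcal{E}^{\perp}$ has the unique form $Q_2y$, and $Q_2y\in\nN(C_{k+1})$ iff $C_{k+1}Q_2y=By=0$, i.e.\ iff $y\in\nN(B)$; hence $\mathcal{P}=Q_2\,\nN(B)$. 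Applying Theorem~\ref{Theo:SVD} to $B=U\Sigma V^H$ with $r'=\rank B$, the columns of $V_2$ are an orthonormal basis of $\nN(B)$, so the columns of $Q_2V_2$ span $\mathcal{P}$, are orthonormal because $(Q_2V_2)^H(Q_2V_2)=V_2^H(Q_2^HQ_2)V_2=V_2^HV_2=I$, and are orthogonal to those of $Q_1$ since $Q_1^H(Q_2V_2)=(Q_1^HQ_2)V_2=0$. Consequently $N_{k+1}=[Q_1\,|\,Q_2V_2]$ has orthonormal columns spanning $\mathcal{E}\oplus\mathcal{P}=\nN(C_{k+1})$, with $Q_1$ a basis for the extended syzygies and $Q_2V_2$ a basis for the pure syzygies; orthonormality forces linear independence, so $N_{k+1}$ is automatically a basis and no separate dimension count is needed.

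The only step with content beyond linear-algebra bookkeeping is the shift property together with the identity $\rR(A)=\mathcal{E}\subseteq\nN(C_{k+1})$. The point I expect to need the most care is that the second SVD is taken of $B=C_{k+1}Q_2$ rather than of $C_{k+1}$ itself, so one must check that pulling $\nN(B)$ back along $Q_2$ recovers exactly the portion of $\nN(C_{k+1})$ orthogonal to the already-extracted extended syzygies and that nothing is missed — which is guaranteed precisely because $Q=[Q_1\,|\,Q_2]$ is unitary and $\rR(Q_1)=\mathcal{E}$ already lies inside $\nN(C_{k+1})$.
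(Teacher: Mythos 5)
Your proof is correct, and its first half coincides with the paper's argument: both establish $C_{k+1}\,[\,Q_1\,|\,Q_2V_2\,]=0$, hence $\Span [\,Q_1\,|\,Q_2V_2\,]\subseteq \nN(C_{k+1})$, from the shift property $\rR(Q_1)=\rR(A)\subseteq\nN(C_{k+1})$ together with $BV_2=0$. Where you genuinely diverge is the completeness step. The paper closes the argument by a dimension count, invoking a rank inequality for the product $C_{k+1}Q_2V_2$ to force $\rank[\,Q_1\,|\,Q_2V_2\,]\ge\dim\nN(C_{k+1})$; you instead prove spanning directly, writing any $w\in\nN(C_{k+1})$ as $Q_1Q_1^Hw+Q_2Q_2^Hw$, noting that the first summand already lies in $\nN(C_{k+1})$ because $\rR(Q_1)=\rR(A)$ does, and pulling the second summand back through $Q_2$ into $\nN(B)=\rR(V_2)$. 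Your route is the more robust one: the inequality as printed in the paper is Sylvester's rank inequality with the direction reversed (correctly it reads $0=\rank(C_{k+1}Q_2V_2)\ge\rank C_{k+1}+\rank(Q_2V_2)-q'$, which bounds $\rank(Q_2V_2)$ from above by $\dim\nN(C_{k+1})$, not from below), and the intermediate claim $\dim\nN(C_{k+1})\le\rank(Q_2V_2)$ actually fails whenever $r>0$, since $\rR(Q_1)\subseteq\nN(C_{k+1})$ gives $C_{k+1}=C_{k+1}Q_2Q_2^H$ and hence $\rank C_{k+1}=r'$, so that $\rank(Q_2V_2)=q'-r-r'=\dim\nN(C_{k+1})-r$; a repaired count would use precisely this identity, giving $\rank[\,Q_1\,|\,Q_2V_2\,]=r+(q'-r-r')=q'-r'=\dim\nN(C_{k+1})$. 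Your orthogonal-decomposition argument needs no such count, delivers the orthonormality of $N_{k+1}$ and the identification of $Q_2V_2$ with the pure (new, non-extended) syzygies as byproducts, and is the version of the proof I would recommend keeping.
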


\begin{proof}
  We start with the observation that 
  \[
  C_{k+1}\begin{bmatrix} Q_1&Q_2V_2\end{bmatrix}=0
  \]
  since
  $ V_2 $ is a basis for $ \nN(C_{k+1}Q_2) $ and $ Q_1 $ is a basis
  for $ \rR(A) \subseteq \cN (C_{k+1)} $. This yields that
  \[
  \Span \begin{bmatrix} Q_1 & Q_2 V_2\end{bmatrix} \subseteq
  \nN(C_{k+1}).
  \]
  On the other hand,
  \[
  0 = \rank C_{k+1} \, Q_2 \, V_2 \leq  \rank C_{k+1} + \rank  Q_2
  V_2-q',
  \]
  hence
  \[
  \dim \nN(C_{k+1})= q' - \rank C_{k+1} \leq \rank Q_2 \, W_2 \leq
  \rank \begin{bmatrix} Q_1 & Q_2 V_2\end{bmatrix},
  \]
  yields that $\rank \begin{bmatrix} Q_1 & Q_2 \,
    W_2\end{bmatrix}=\dim \nN(C_{k+1})  $, and
  this completes the proof. 
\end{proof}
\vspace{0.5cm}
\noindent
Theorem~\ref{Theo:7} can be immediately translated into Algorithm~\ref{alg:PuSyz} to extract an orthogonal basis for $\cN(C_{k+1})  $ and consequently pure syzygies. $ \tau$ and $\tau' $ which are used to decide the numerical ranks are introduced in Section~\ref{sec:5}.
\begin{algorithm}[H]
\caption{{\sc Syzygy update}}
\label{alg:PuSyz}
\begin{algorithmic}
{\small
    \REQUIRE { $ N_k $}
    \ENSURE {orthogonal basis $ N_{k+1} $ } 
    \STATE {construct the block diagonal matrices $ L_{j} $ for $ j=1,\dots, n $} 
    \STATE {$ A\leftarrow \begin{bmatrix}
  L_1 &\cdots& L_n
\end{bmatrix}\, N_k $}
    \STATE {$ QSW^{H}\leftarrow $ SVD$ (A),\; r\leftarrow \max \{r : s_{r}>\tau\} $}
    \STATE {$  \begin{bmatrix}
  Q_1 \,|\, Q_2
\end{bmatrix}\leftarrow Q,\; Q_1 \in \CC^{d \times r}, \, Q_2 \in \CC^{d \times
  d-r} $}
        \STATE {$ B\leftarrow C_{k+1}Q_{2} $}
        \STATE {$U\Sigma V^{H}\leftarrow $ SVD$ (B),\; r'\leftarrow \max \{r' : \sigma_{r'}>\tau'\} $}
        \STATE {$  \begin{bmatrix}
  V_1 \, |\, V_2
\end{bmatrix}\leftarrow V,\; V_1 \in \CC^{d' \times r'}, \, V_2 \in \CC^{d' \times d'-r'}  $}
         \STATE {$ N_{k+1}\leftarrow \begin{bmatrix}
    Q_1 \,|\, Q_2 V_2
  \end{bmatrix} $}
}
\end{algorithmic}
\end{algorithm}

\noindent
It is now possible to formulate the degree by degree approach to compute
H-bases using the iterative orthogonalization scheme of Theorem~\ref{Theo:7}. In the next section first we remind a numerically
description of reduction algorithm presented in \cite{Sauer01}, then
we give an updated version of H-bases algorithm using Theorem~\ref{Theo:7}. 

\section{Reduction and H-basis}
To describe the reduction algorithm, we fix, according to
\cite{Sauer01}, an inner product $ (\cdot,\cdot) : \PP\times \PP \rightarrow
\KK $. Keeping the terminology of the previous section, the orthogonal
complement of $  \cM_k(F) $ with respect to $(\cdot,\cdot)$ is defined as 
\[
\cW_k(F):=\PP_{k}\ominus \cM_k(F)=\{f \in \PP_{k} : (f,\cM_k(F))=0\},
\]
or, equivalently,
\[
\PP_{k}=\cM_k(F)\oplus\cW_k(F).
\]
These orthogonal vector spaces enable us to decompose every
homogeneous polynomial $ g\in \PP_k $ in two parts, a part in $
\cM_k(F) $ and another part in $\cW_k(F)$ orthogonal to it, in which
makes a homogeneous remainder $ r^{k} $:
\begin{equation}
  \label{eq:DecHom}
  g=\underbrace{\sum_{j=1}^{p} c_j v_j }_{\in
    \cM_k(F)}+\underbrace{\sum_{j=1}^{q} c_{p+j} w_j }_{\in
    \cW_k(F)}:=\sum_{j=1}^{p} c_j v_j+r^{k}. 
\end{equation} 
This is the main idea of the reduction algorithm which gives an
orthogonal decomposition of every polynomial $ p\in \PP $ as
\begin{equation}
  \label{eq:Dec}
  p=\sum_{f\in F}q_ff+r, \qquad
  \deg (q_ff)\leq \deg(p), \qquad
  r\in \bigoplus_{k=0}^{\deg(p)} \cW_k(F). 
\end{equation}
\noindent
Taken together, Definition~\ref{D:CkDef} and Theorem~\ref{Theo:SVD}
help us to find a basis for $\cM_k(F) $ as well as $ \cW_k(F)$ and
eventually the representation (\ref{eq:DecHom}). Indeed if $C_k(F) =U\Sigma V^{H}  $ is an SVD and $ p=\rank  C_k(F)  $, then 
\[
\cM_k(F)=\Span \{u_1,\dots, u_p\}, \qquad \cW_k(F)=\Span
\{u_{p+1},\dots, u_q\}.
\]
Thus, the coefficient vector $c = \left( c_j : j=1,\dots,p \right)$ in
(\ref{eq:DecHom}) is obtained as a solution 
of the linear system 
\[
\left[u_1, \dots, u_{p}, u_{p+1}, \dots, u_q\right] c = g.
\]
On the other hand, the thin SVD from \cite{GolubvanLoan96} yields
\begin{equation}
 \label{eq:thin-SVD}
 u_j=\dfrac{1}{\sigma_{j}}C_k(F)v_j,\qquad j=1, \dots, p
\end{equation}
and 
replacing (\ref{eq:thin-SVD}) in (\ref{eq:DecHom}) results in
\begin{equation}
  g = \sum_{j=1}^{p} c_j u_j+r^{k}
  = C_k(F) \underbrace{\left( \sum_{j=1}^{p} \dfrac{c_j}{\sigma_j} v_j
    \right)}_{=:\tc}+r^{k}=C_k(F)\tc+r^{k}.
\end{equation}
Partitioning $\tilde{c} = \left( \tc_f : f \in F \right)$ according to the
blocks of $ C_k(F) $ leads to 
\[
g=\sum_{f\in F} C_k(F)\tc_{f}+r^{k},\qquad\;\tc_{f}\in \KK^{\dim
  \PP_{k-\deg(f)}};
\]
by adding proper zeros we finally transition each $ \tc_{f} $ to
\[g_{f,k}:= \begin{bmatrix} 0\\
  \tc_{f} \end{bmatrix}\in \KK^{\dim \PP_{\leq k-\deg(f)}},
\]
which results in the representation 
\[
g=\sum_{f\in F} g_{f,k} \lf(f)+r^{k}.
\]
Repeating this process for each homogeneous part of $ p $ gives us the representation (\ref{eq:Dec}) where
\[q_f=\sum_{k=0}^{\deg(p)}\sum_{f\in F} g_{f,k}, \qquad\qquad r=\sum_{k=0}^{\deg(p)} r^{k}.\]
This leads to the following definition
\begin{definition}
For a given sequence of polynomials $ F $, a polynomial $ p \in \PP $ is called \textit{reducible} module $ F $ and denoted by
$ p\rightarrow _{F} r$
if there exists the representation (\ref{eq:Dec}) for $ p $. In addition, we call $ r $ the \textit{reduced form} of $ p $ w.r.t. $ F $. 
\end{definition}
\begin{remark}
 \label{Re:unique-rem}
It should be mentioned that this definition implies that the
\textit{reduced form} of a polynomial is only dependent on the order
of polynomials and inner product. Moreover it is illustrated in
\cite{Sauer01} that if $ F $ is replaced by an H-basis then the
\textit{reduced form} is unique for a fixed inner product. 
This process can be summarized in the following algorithm
\end{remark}
\begin{algorithm}[H]
\caption{{\sc Reduction }}
\label{alg:Reduction}
\begin{algorithmic}
{\small
    \REQUIRE {polynomial system $ F=\{f_1, \dots, f_s\}\subseteq \PP $,\; $ p\in \PP $ }
    \ENSURE {$ q_f\in \PP,\; r\in \bigoplus_{k=0}^{\deg(p)}\cW_k(F) $ with $ p=\sum_{f\in F} q_ff+r $ }
    \STATE {$r\leftarrow 0, \; q_f\leftarrow 0,\; f\in F$}
    \WHILE{$ p\neq 0 $} 
          \STATE {$ g\leftarrow\lf(p),\; k\leftarrow\deg(p) $}
          \STATE {construct $ C_{k}(F) $}
          \IF {$ C_{k}(F)\neq 0 $}
                 \STATE {decompose $ C_{k}(F)=U\Sigma V^{H} $ using SVD}
                 \STATE {compute homogeneous decomposition $ g=\sum_{f\in F} g_{f} \lf(f)+r^{k} $ }
                 \STATE {$ r\leftarrow r+r^{k} $}
                 \STATE {$ p\leftarrow p-\sum_{f\in F} g_f f - r^{k} $}
                 \STATE {$ q_f\leftarrow q_{f}+g_{f} $}
          \ELSE
                  \STATE {$ r=r+g $}
          \ENDIF
    \ENDWHILE
}
\end{algorithmic}
\end{algorithm}

\noindent
Now the pseudo-code for the update version of H-basis algorithm is shown in Algorithm~\ref{alg:H-basis}. The algorithm starts for the initial degree $ k=\min (d_i : 1\leq i \leq s) $. An orthogonal basis for $ N_k $ in the early step of each iteration process is computed from the SVD of $ C_{k}(F) $. The subsequent steps of the algorithm are then computing a basis for pure syzygies and reducing the corresponding polynomial using Algorithm~\ref{alg:Reduction}. The updating of the $ bound $ is explained in the following. 

\begin{algorithm}[H]
\caption{{\sc Numerical H-basis }}
\label{alg:H-basis}
\begin{algorithmic}
{\small
    \REQUIRE {polynomial system $ F=\{f_1, \dots, f_s\}\subseteq \PP $ of degree $ d_1, \dots, d_s $ }
    \ENSURE {H-basis for the ideal $\langle F \rangle$ }
    \STATE {$k\leftarrow \min (d_i : 1\leq i \leq s),\; bound\leftarrow 2\max (d_i : 1 \leq i\leq s)$}
    \STATE {$N_{j}\leftarrow\emptyset,\; 0\leq j\leq k-1$}
    \WHILE{$ k\ \leq\ bound $} 
          \STATE {construct $ C_{k}(F) $}
          \IF {$ N_{k-1}= \emptyset $}
                
                 \STATE {$ \widetilde{N_k}:=N_{k}\leftarrow $ an orthogonal basis for $ \nN(C_{k}(F)) $}
          \ELSE
                  \STATE {construct $ \begin{bmatrix}
                                   Q_1 \,|\, Q_2 V_2
                                     \end{bmatrix} $ using Algorithm~\ref{alg:PuSyz}}
                  \STATE {$ N_{k}\leftarrow \begin{bmatrix}
                                   Q_1 \,|\, Q_2 V_2
                                     \end{bmatrix} $, $\; \widetilde{N_k} \leftarrow Q_{2}V_{2}  $}
          \ENDIF
          \WHILE {$ \widetilde{N_k}\neq \emptyset $}
                  \STATE { $p\leftarrow \sum_{i=1}^{s} \sum_{|\alpha| = k-d_i} v_{i,\alpha} \,
                                x^\alpha \, f_i, \; v\in \widetilde{N_k}$} 
                  \IF {$ p\rightarrow_{F} f_{s+1}\neq 0 $} 
                        \STATE {$ F\leftarrow F\cup \{f_{s+1}\},\; s\leftarrow s+1$}
                        \STATE {$ k\leftarrow \deg (f_{s})-1$}
                  \ENDIF 
          \ENDWHILE
          \STATE {update $ bound $}
          \STATE {$ k\leftarrow k+1 $}
    \ENDWHILE
}
\end{algorithmic}
\end{algorithm}

In view of the above algorithm, we need to choose an
appropriate degree \textit{bound} to ensure that a generating set
for $ \Syz(\lf(F)) $ has been constructed. Since we generate syzygies
degree by degree, reaching such a bound tells us that all syzygies
reduce to zero and thus the polynomial system is indeed an H-basis.
Finding this \emph{stopping criterion} has already been discussed in
\cite{batselier14:_canon_decom_c_numer_groeb_border_bases,HashemiJavanbaht2017S}
in detail. However, for the sake of completeness, we give a short
analysis here.

A key component in finding the termination degree relies on
Schreyer's theorem. Schreyer showed in his diploma thesis
\cite{schreyer80:_berec_syzyg_weier_divis} that a generating
set, more precisely, even a Gr\"obner basis, for the syzygy module of a
Gr\"obner basis is obtained by reducing every S-polynomial of each
pair of polynomials of underlying Gr\"obner basis to zero,
cf. \cite{schreyer80:_berec_syzyg_weier_divis}. This implies that if $
G $ is a Gr\"obner basis for $ \langle G\rangle $, then 
 $ k=\max\{\deg \tau_{ij}: \tau_{ij}=\lcm (\lm(f_i),\lm(f_j)),\ f_{i},
 f_{j}\in G\} $ is a maximal degree on $ h_{i}g_{i}$, where $ (h_{1},
 \dots, h_{m})\in \Syz(G) $ and $ \lm(f) $ stands for a leading
 monomial of a polynomial $ f $ under a given monomial ordering. 

On the other hand, Buchberger's criterion provides an algorithm to
construct a Gr\"obner basis by computing the remainder of each
S-polynomial and adding the non-zero remainders to the candidate set
\cite{CoxLittleOShea92}. Lazard showed that computing such a remainder
is equivalent to bring a resultant matrix into triangular form,
\cite{lazard83:_gro_gauss}. By this argument, the pivots of a row
reduced echelon form of $ C_k(F) $ correspond to the leading
monomials of the reduced Gr\"obenr basis for $ \lf(F)$ for
sufficiently large $ k $, provided that all columns of the transpose $
C_k(F)$ are reversed.
We refer to such a matrix by $ R_{k}(F) $. This tells us
that if we bring $ C_k(F) $ into a triangular form $ R_k(F) $, then
the maximum degree of least common multiple of each pair of leading
monomials provides an upper bound to find all of syzygies of $ \lf(F)
$. For this $ k $ the reduced Gr\"obner basis of $ \lf(F) $ can be
retrieved from $ R_k(F) $ which is discussed in detail in
\cite{batselier14:_canon_decom_c_numer_groeb_border_bases}.
 
This argument gives an approach to find an appropriate degree bound to
terminate the above algorithm. In doing so, we compare
the pivot elements of $ R_{k}(F) $ with the pivots of $ R_{k-1}(F) $
which have been transitioned to the monomials in $ \TT_{k} $ by
multiplying with the shift matrices from
Section~\ref{Sec:result}. If there exists any new leading monomial, we
update the bound by computing the maximum degree of least common
multiple of each pair of leading monomials. Otherwise, if the
algorithm reaches the updated bound and no new leading
monomial appears up to this degree, it follows that a Gr\"obner basis of
$ \cM_{bound}(F) $ and consequently a basis of $ \Syz(\lf(F)) $
is found. In this case, the algorithm cab be terminated if there is no
non-zero reminder of reduction. Since the bound will become stable after
finitely steps due to the ascending chain condition property of $ \PP
$, cf.\cite{CoxLittleOShea92}, the algorithm will always terminate.

\begin{remark}
It should be mentioned that we use the theory of Gr\"obner basis here
only to find the upper bound for the algorithm by simply computing the
row reduced echelon form of a matrix without computing any
Gr\"obner basis or any S-polynomials. The algorithm itself is
still free of computing Gr\"obner bases.
\end{remark}

\begin{remark}
As mentioned in Remark~\ref{Re:unique-rem}, the reduced form of a
polynomial w.r.t. an H-basis is unique. This helps us to solve
the ideal membership problem easily by finding an H-basis first and
then applying reduction to a given polynomial. Finding the common zeros of a set
of polynomials by means of eigenvectors of a generalization of
Frobenius companion matrices is another problem
which is addressed in \cite{moeller01:_multiv} and discussed using
H-basis technique in \cite{MoellerSauer00}.
\end{remark}


\section{Numerical results}
\label{sec:5}

Any numerical implementation of the above algorithms has to rely on a
\emph{tolerance} $ \tau $ 
to decide the numerical rank and a \emph{threshold} $ \varepsilon $ that
determines whether a float number is numerically zero or not. The
latter one is usually chosen as the \emph{machine accuracy} which
depends on the mantissa length of the underlying floating point
arithmetic and is roughly the value $ 2.22\times 10^{-16} $ for 
double precision floating point numbers, the most frequently used arithmetic on
contemporary processors. 
A standard choice for $\tau $ in \cite{GolubvanLoan96} is then given by
$\tau=\max\{n,m\} \max_{j}\sigma_{j} \epsilon$, where $ \epsilon $
the $ \sigma_{j}$ are the singular values of the underlying matrix
with $ m $ columns and $ n $ rows. This defines the meaning of the
term \emph{rank} in Numerical Linear Algebra, often referred to as
\emph{numerical rank}, in contrast to its meaning in Linear
Algebra. In other words, if $ A $ is an $ m\times n $ matrix with
singular values $ \sigma_1\geq\cdots \geq \sigma_n $, then the
numerical rank $ r $ is chosen such that  
\begin{equation}
 \label{def:numer-rank}
 \sigma_1\geq\cdots\geq \sigma_r\geq \tau\geq \sigma_{r+1}\geq \cdots
 \geq \sigma_n 
\end{equation}
cf. \cite{batselier14:_macaul}. For an exact definition and detailed discussion of numerical rank, we
refer to \cite{li09}. (\ref{def:numer-rank}) shows that
the correct determination of the numerical rank strongly depends on a
good choice of $ \tau $. On the other hand, the determination of
numerical rank is a crucial step in Algorithm \ref{alg:PuSyz} to
distinguish new syzygies from extended ones and consequently finding a
correct H-basis. So, the good choice of $ \tau $ to guarantee the
correct result is a must. 

Our numerical tests illustrate that a standard choice of $ \tau $
works truly for most experiments though it fails for some polynomial
systems like Caprasse4. We will observe that in this case there is a
large gap between singular values in index $ k $ but the default value
of $ \tau $ is smaller than $ \sigma_{k} $, so the numerical rank
detection fails and the syzygies are \emph{ill-conditioned}.
The ratio $ \sigma_{r}/\sigma_{r+1} $, the so called
approx-rank gap influences the accuracy of rank-revealing computation.
This is discussed in details in
\cite{demmel15:_commun_qr,foster06:_compar}. Indeed a well-defined
numerical rank leads to a choice of $ r $ which maximises this approx-rank
gap. 
 
Apart from rank revelation, also the choice of the threshold $
\varepsilon $ clearly affects the outcome of the algorithm. An
inappropriate value for $ \varepsilon $ results in apparently very
small remainders in the reduction, which forces the algorithm to end
up with the basis $1$, falsely claiming that the polynomials have no
common zeros. To avoid this problem, we have to increase the value
$\varepsilon$ to the suitable value which then yields the correct
result. We will discuss this effect in details for a polynomial system
named after R.~Sendra. 

\subsection{Experiment setup}
Here we compare the efficiency and the stability of the numerical H-basis
algorithm with the symbolic algorithm presented in
\cite{HashemiJavanbaht2017S}.
All numerical experiments are carried out on a 2.5 GHz seven-core
personal computer with 8 GB RAM using 64-bit Matlab and usually the
machine precision $ \epsilon\approx 2.22\times 10^{-16} $. Our
numerical H-basis algorithm is implemented as a Matlab module {\sc
  H-Basis} that is electronically available from the authors upon
request. 

In the first group of experiments we will show how much a numerical
implementation can speed up the degree-by-degree approach to compute
H-bases. The capability of the algorithm is evaluated for a benchmark
set of examples with different Krull-dimensions which confirms that
our approach is not restricted to zero dimensional ideals. In the
second part, however, we will discuss the problems arising from the
floating numbers in the numerical implementation. In each experiment
we use the Hilbert polynomial of the ideal to check the correctness of
the obtained numerical H-basis. Indeed the Hilbert polynomial of an
ideal equals to the Hilbert polynomial of the leading form ideal,
cf. \cite{KreuzerRobbiano00}.

In what follows, the run time is measured in seconds. The H-basis
column shows that how many polynomials are detected in corresponding
H-basis. $ d_{max} $ is the maximum degree of polynomials in the
H-basis and $ bound $ shows the degree that algorithm is
terminated. The names of the benchmark ideals are due to
\begin{quote}
  {\tt
  http://homepages.math.uic.edu/$ \sim $jan/Demo/}. 
\end{quote}

\subsection{Correct experiments}
We begin by listing some examples where the numerical method performed
correctly and obtained the proper H-basis, at least when the
thresholding parameter was chosen properly.

\par\medskip
\par\noindent\textbf{Weispfenning94.}
In the first numerical experiment, the capability of the algorithm is
tested for a 0-dimensional ideal which consists of 2 polynomials of
total degree 5 and a polynomial of degree 4 in 3 variables. The
numerical approach speeds up the calculation of the H-bases by a
factor of 96.
\[\left\lbrace 
\begin{array}{cr}
f_1: & xy^2z +y^4 +x^2  -2xy  +y^2 +z^2 \\
f_2: &  -x^3y^2 +xyz^3 +xy^2z +y^4  -2xy \\
f_3: &  xy^4 +yz^4  -2x^2y  -3
\end{array}
\right.
\]

\begin{center}
\begin{tablehere}
\begin{tabular}{ |c||c|c|c|c|c| } 
\hline
Weispfenning94 & $ \varepsilon $ & time &  H-basis & $d_{max}$ & $ bound $\\
\hline
{\sc Symbolic} &- & 2507 & 6 &  6& 18 \\
\hline
 & $ 10^{-12} $& 26.3 & 11 &  &    \\
 
  {\sc Numerical} & $ 10^{-10} $& 26.3 & 11 & 6 & 18   \\

                     & $ 10^{-8} $& 26.7  & 11 &  &    \\
\hline
\end{tabular} 
\end{tablehere} 
\end{center}

\par\noindent\textbf{Liu.}
For the second numerical experiment, we consider a one dimensional
polynomial system consists of 4 polynomials in 5 variables of total
degree 2.
\[\left\lbrace 
\begin{array}{crcc}
f_1: & yz-zw-x+u  \\
f_2: & -xy+yw-z+u \\
f_3: &  xw-zw-y+u  \\
f_4: &  -xy+xz-w+u 
\end{array}
\right.
\]

\begin{center}
\begin{tablehere}
\begin{tabular}{ |c||c|c|c|c|c| } 
\hline
Liu &$ \varepsilon $ & time  &  H-basis & $ d_{max} $ & $ bound $ \\
\hline
{\sc Symbolic} &- & 526  & 5 & 2 & 8 \\
\hline
        & $ 10^{-12} $& 13.44 & 5 &  &    \\

 {\sc Numerical} & $ 10^{-10} $& 13.6   & 5 & 2 & 8   \\

            & $ 10^{-8} $& 13.26 & 5 &  &    \\
\hline
\end{tabular}
\end{tablehere}
\end{center}

\par\noindent\textbf{Gerdt2.}
In the third numerical experiment, we consider a three dimensional
polynomial system that consists of 2 polynomials in 5 variables of
degree 4.
\[\left\lbrace 
\begin{array}{crcc}
f_1: &\hspace{-0.2cm} 5xy^3-140y^3z-3x^2y+45xyz+210y^2w-420yz^2-25xw+126yu+70zw  \\
f_2: &\hspace{-0.2cm} 35y^4-30xy^2-210y^2z+3x^2+30xz+140yw-105z^2-21u 
\end{array}
\right.
\]

\begin{center}
\begin{tablehere}
\begin{tabular}{ |c||c|c|c|c|c|c| }
\hline
Gerdt2 & $ \varepsilon $ & time  &  H-basis  & $ d_{max} $ & $ bound $ \\
 \hline {\sc Symbolic} &- & 3840   & 6 & 6 & 12\\
\hline
 &$10^{-10}$ & - & constant &  &  \\
 {\sc Numerical}  &$10^{-9}$ & 165 & 6 & 6 & 12 \\
 &$10^{-6}$ & 165 & 6 &  &  \\
\hline
\end{tabular}
\end{tablehere}
\end{center}
The following tables show further results of successful runs. The
polynomial system Schwartz can be found in
\cite{decker99:_primar_decom} and Lazard-Mora is defined in
\cite{lazard83:_gro_gauss}. All the other polynomial systems in this
section and the next section are available at the aforementioned
website.

\begin{center}
\begin{tablehere}
\begin{tabular}{ |c||c|c|c| } 

\hline
Lorentz & $ \epsilon $ & time &    H-basis \\
\hline
{\sc Symbolic} &- & 6s & 5   \\
\hline
 {\sc Numerical} & $ 10^{-10} $ & 3.05 & 5  \\
\hline

\multicolumn{1}{c}{}  \\
\hline
Conform1 & $ \epsilon $ & time &  H-basis  \\
\hline
{\sc Symbolic}& - & 19  & 10   \\
\hline
{\sc Numerical} & $ 10^{-10} $ & 1.82 & 10  \\
\hline

\multicolumn{1}{c}{}  \\
\hline
Redeco5 & $ \epsilon $ & time &  H-basis \\
\hline
{\sc Symbolic}  & - & 252 & 5   \\
\hline 
 {\sc Numerical} & $ 10^{-10} $ & 34.7  & 5  \\
\hline

\multicolumn{1}{c}{}  \\
\hline
Noon & $ \epsilon $ & time  &  H-basis \\
\hline
{\sc Symbolic} & -& 7660  & 7   \\
\hline
 {\sc Numerical} &$ 10^{-10} $ & 172.1 & 9 \\
\hline

\multicolumn{1}{c}{}  \\
\hline
Schwartz & $ \epsilon $ & time &  H-basis \\
\hline
{\sc Symbolic} & -& 18672 & 6   \\
\hline
 {\sc Numerical} &$ 10^{-10} $ & 183.3 & 6 \\
\hline

\multicolumn{1}{c}{}  \\
\hline
Lazard-Mora & $ \epsilon $ & time &  H-basis \\
\hline
{\sc Symbolic} & -& $ > 2 $ hours & 3   \\
\hline
 {\sc Numerical} &$ 10^{-10} $ & 338.2  & 3 \\
\hline

\end{tabular}
\end{tablehere}
\end{center}

\subsection{Ill-conditioned ideals}
Next, we show some examples where numerical ill-conditioning occurred
and discuss the reasons for failure in some more detail.

\par\medskip
\par\noindent\textbf{Sendra.}
In this example we illustrate the failure of the numerical algorithm to find an H-basis because of small remainders. The polynomial system consists of 2 polynomials in two variables of total degree 7 with Krull-dimension 0. 
\begin{center}
\begin{tablehere}
\begin{tabular}{ |c||c|c|c|c|c| } 
\hline
Sendra & $ \varepsilon $ & time &  H-basis & $ d_{max} $ & $ bound $   \\
\hline
{\sc Symbolic} &- & 6  & 4 & 12 & 22 \\
\hline 
  {\sc Numerical}       &$ 10^{-10} $ & -  & constant  & - & -\\
     
       &$ 10^{-4} $ & 1.15  & 4  & 12 & 22 \\
\hline
\end{tabular}
\end{tablehere}
\end{center}
The algorithm starts with $ k=7 $ but first syzygies appear in $ k=13 $. $ C_{13}(F) $ is a $ 14\times 14 $ matrix with density $ 28\% $ and singular values 
\[   \sigma_{13}= 0.364> \tau\approx 10^{-11} > 1.7\times 10^{-11} = \sigma_{14}. \]
Thus, $ \rank C_{13}(F)=13 $ and $ \dim \cN(C_{13}(F))=1 $. The
corresponding syzygy is reduced to the non-zero remainder of degree 11
and 2-norm $ 4.039\times 10^{3} $. The second non-zero remainder is
detected in $ k=12 $, where $ C_{12}(F) $ is a $ 13\times 14 $ matrix
with density $ 38\% $, the tolerance and singular values 
\[ \sigma_{12} = 0.029> \tau\approx 10^{-11}> 1.14\times 10^{-13} = \sigma_{13}. \]
The numerical rank is determined by the default tolerance as
12. Therefore, the null space satisfies $\dim \cN(C_{12}(F))=2 $ and
the corresponding
polynomial to this new syzygy gives a non-zero remainder of total
degree 11 and 2-norm $ 9.37\times 10^4 $. The third step of the
algorithm is started with $ k=11 $, and the first new syzygies are
appeared in $ k=12 $. $ C_{12}(F) $ is a $ 13\times 16 $ matrix with $
\dim \cN(C_{12}(F))=3 $. The exact computations in Maple show that
their corresponding polynomials are reduced to zero. However in Matlab
we have a non-zero remainder of degree 10 and 2-norm $ 1.744 \times
10^{-6} $. Continuing this process in the next steps results a very
small non-zero remainder in each step but still greater than threshold
$ \varepsilon $, so that the algorithm ends up with a constant
non-zero remainder $ -1.36\times 10^{-10} $. To get ride of this
obstacle we increased the $ \varepsilon $ from $ 10^{-10} $ up to $
10^{-4} $ and observed that the algorithm terminates in two steps with
a correct H-basis. 
\begin{remark}
It is worth mentioning that if we divide each polynomial in the initial polynomial system as well as the obtained remainders in each step by 1-norm, 2-norm and $ \infty $-norm, then all of small remainders will be vanished and we will obtain a correct H-basis in two steps for $ \varepsilon=10^{-9} $. It means that in case that the algorithm fails to get a correct H-basis due to the small remainders, normalizing the polynomials can counteract the bad affect of small reminders by minifying the coefficients of underlying polynomials in the reduction process and give the correct result.  
\end{remark}
 
\par\noindent\textbf{Caprasse4.}
The polynomial system so-called Caprasse4 demonstrates the failure of rank-revealing for a default numerical tolerance. Our observations show that the failure of rank-revealing during one of the iterations destroys the result of all of the consequent iterations in a way that the algorithm ends up with very small non-zero constant which is not an acceptable H-basis. This polynomial system consists of 4 polynomials of degrees 3,3,4,4 on 4 variables with Krull-dimension 0.

For the first observation we suppose that $ \varepsilon =10^{-10} $ and observe that the first failure of rank-revealing occurs in step = 4 (each step is whenever a new non-zero reminder is added to the candidate set) at $ k=5 $. Inspecting $ \sigma_{37}=0.0017 $ and $ \sigma_{38}=1.56\times 10^{-12} $ shows that the numerical rank should be $ 37 $ instead of $ 40 $ although the default tolerance is $ 8.53\times 10^{-14} $. It means that $ \dim \cN(C_{5}(F))=0 $ and hence $ \dim\Syz_{5}(\lf (F))=0 $. The symbolic implementation 
however shows that the rank is $ 37 $ and $ \dim \cN(C_{5}(F))=3 $. In spite of this numerical rank, the polynomials corresponding to these three detected syzygy in Maple algorithm is reduced to zero. 
At $ k=6 $ the numerical rank is estimated to be $ 79 $. While the approx-rank gap is maximised at 74 with $ \sigma_{74}/\sigma_{75}=1.71\times 10^{8} $.
 It implies $ \dim \Syz_{6}(\lf (F))=11 $ with a non-zero remainder of 2-norm $ 3.05\times 10^{-10} $.
 If we let the execution of the algorithm continues, very small non-zero remainders detected in the next steps (of 2-norm almost $ 3.8\times 10^{-9} $) make the algorithm to end up with a non-zero but very small constant. 
 
 To release the affect of small non-zero remainders and exploring the impact of wrong rank-revealing we increased the threshold to $ \varepsilon=10^{-7} $. As we expect we will not have any non-zero remainder in step = 4 at $ k=6 $ and algorithm proceeds with $ k=7 $. Having syzygies at $ k=6 $, a $ 40\times 44 $ matrix $ A $ of extended syzygies is generated. It's singular values show that the rank-revealing determines the numerical rank correctly to be $ 44 $. The approx-rank gap $ \sigma_{110}/\sigma_{111}=2.85\times 10^{9} $ of matrix $ B $ however shows that another wrong rank-revealing has occurred since the numerical rank is estimated to be 112. Regardless of fault of rank-revealing both symbolic and numerical algorithms find a non-zero remainder of total degree 5. The non-zero remainder computed in Maple has 2-norm $ 17.06 $ while 2-norm of non-zero remainder of Matlab implementation is 0.34. So, step 5 starts from $ k=5 $ with the same values for singular values and tolerance as reported for $ \varepsilon =10^{-10} $. For $ k=6 $ approx-rank gap at 78 is $ \sigma_{78}/\sigma_{79}=2.96\times 10^{8} $ which illustrates the numerical rank should be 78 but it is estimated to be 79 instead. Here all of new detected syzygies are reduced to zero. In symbolic implementation though a non-zero remainder of degree 4 is detected.
 Tracing the algorithm in the next steps show that the next non-zero remainder is of total degree 5 which is detected at $ k=7 $. In the next steps of the algorithm the small remainders are appeared such that again the algorithm is ended up with a non-zero small remainder. 
 
 The above observations show that even a minor error in the
 rank-revealing in one step is caused the magnitude faults in the
 following steps and eventually the wrong result. This comes back to
 the sensitivity of the floating numbers as well as the
 rank-revealing. In the case that there is a large gap in singular
 values at index $ k $ but the tolerance is larger than $ \sigma_{k} $
 the other rank-revealing methods such as L-curve analysis are the
 better choice to determine the numerical rank
 cf.~\cite{hansen98:_rank}.
 
 
\section{Conclusion}
We have presented a numerical stable algorithm to compute H-bases. The
approach benefits the SVD to compute a minimal generating set for the
syzygy of a given degree $k$ of the underlying ideal which speeds up
computing H-bases dramatically.
This considerable
achievement in run time comes at the cost of losing an accurate
H-basis in some examples because of the rank-revealing fault or very
small remainders due to the floating numbers property. But these kind
of ideals do not appear frequently in practice. 

It is worthwhile to be mentioned that as the proposed approach
computes a minimal generating set for syzygies in each degree, it can
be applied to  compute the second syzygies, third syzygies and etc. It
means that for a given ideal we can make the finite chain of syzygies
(with the length at most equals to the number of variables), so-called
free resolutions which are very important tools in commutative and
computational algebra and releases important invariants such as Betti
numbers, Hilbert regularity and Krull-dimension,
cf.~\cite{peeva11:_graded_syzyg}. To deal with this issue needs very
precise discussion which is left to the future for further discussion.

\end{document}